\documentclass[]{article}
\usepackage[utf8]{inputenc}
\usepackage[english]{babel}
\usepackage{amsthm}
\usepackage{amsfonts}
\usepackage{hyperref}
\usepackage{amsmath}
\newtheorem{theorem}{Theorem}
\newtheorem{corollary}{Corollary}
\newtheorem{lemma}{Lemma}

\title{Regular grid subgraphs of maximal girth}
\author{
\textsc{Jan Kristian Haugland}\\
\texttt{admin@neutreeko.net}}

\begin{document}

\maketitle

\begin{abstract}
\noindent The unit-distance graph on the $n$-dimensional integer lattice $\mathbb{Z}^n$ is called the $n$-dimensional grid. We attempt to maximize the girth of a $k$-regular (possibly induced) subgraph of the $n$-dimensional grid, and provide examples and bounds for selected values of $n$ and $k$, along with more general results. A few cases involving alternative lattices are also considered.
\end{abstract}

\section{Introduction}
If $G$ is a graph, the \textit{girth} of $G$ is the length of its shortest cycle, and is denoted by $g(G)$. 

An \textit{induced} subgraph $H$ of a graph $G$ contains an edge between any two vertices in its vertex set if they are adjacent in $G$.

If $\Lambda$ is a point lattice, the minimum distance graph of $\Lambda$ is called the \textit{grid} associated with $\Lambda$. For a given point lattice and some integer $k \geq 3$, we can ask for the maximal girth of a $k$-regular (possibly induced) subgraph of the associated grid. In \cite{Haugland:2003}, the author solved the case with induced subgraphs for the point lattice $\mathbb{Z}^3$ and $k = 3$, giving the four different isometry classes of subgraphs of girth 10. The vertex sets of four subgraphs representing those classes are given by $$v(G_1) = \left\{(x, y, z) \in \mathbb{Z}^3 \textrm{ }|\textrm{ } 2x \equiv z \textrm{ (mod 4)} \textrm{ or } 2y \equiv z - 1 \textrm{ (mod 4)}\right\}$$
$$v(G_2) = \left\{ \begin{split}
(x, y, z) \in \mathbb{Z}^3 \textrm{ }|\textrm{ }x + 3y + 5z \geq 3 \textrm{ and } \equiv 2, 3, 4 \textrm{ or } 6 \textrm{ (mod 7)} \\
\textrm{ or } x + 3y + 5z \leq 0 \textrm{ and } \equiv 0, 1, 4 \textrm{ or } 6 \textrm{ (mod 7)}
\end{split} \right\}$$
$$v(G_3) = \left\{(x, y, z) \in \mathbb{Z}^3 \textrm{ }|\textrm{ }x + 2y + 3z \equiv 0, 1, 2 \textrm{ or } 4 \textrm{ (mod 7)}\right\}$$
$$v(G_4) = \left\{ \begin{split}
(x, y, z) \in \mathbb{Z}^3 \textrm{ }|\textrm{ }(x, y, z) \textrm{ or } (x + 2, y + 2, z + 2) \equiv (0, 0, 0), \\
(2, 0, 0), (3, 0, 0), (0, 1, 0), (2, 1, 0), (0, 2, 0), (1, 2, 0), (2, 2, 0), \\
(1, 3, 0), (3, 3, 0), (0, 0, 1), (1, 0, 1), (1, 1, 1), (2, 1, 1), (3, 1, 1), \\
(0, 2, 1), (3, 2, 1), (1, 3, 1), (2, 3, 1) \textrm{ or } (3, 3, 1) \textrm{ (mod 4)}
\end{split} \right\}$$

This paper presents various findings in other cases. For the most part, $\Lambda = \mathbb{Z}^n$ for some integer $n \geq 2$, in which case we simply refer to the associated grid as the $n$-dimensional grid.

The point lattices BCC (body-centred cubic), FCC (face-centred cubic) and $D_4$ will also be considered (at least briefly). The BCC lattice can be defined as $$\{(x, y, z) \in \mathbb{Z}^3 \textrm{ }|\textrm{ } x \equiv y \equiv z \textrm{ (mod 2)}\}$$ with minimum distance $\sqrt 3$, and the FCC lattice can be defined as $$\{(x, y, z) \in \mathbb{Z}^3 \textrm{ }|\textrm{ } x + y + z \equiv 0 \textrm{ (mod 2)}\}$$ with minimum distance $\sqrt 2$. An example from the literature of an induced subgraph of the grid associated with the FCC lattice is given by the Laves graph, as described in \cite{Coxeter:1955}. (Incidentally, the Laves graph is isomorphic to the graph $G_3$ given above.) $D_4$ can be defined as $$\{(x_1, x_2, x_3, x_4) \in \mathbb{Z}^4 \textrm{ }|\textrm{ } x_1 + x_2 + x_3 + x_4 \equiv 0 \textrm{ (mod 2)}\}$$ with minimum distance $\sqrt{2}$. The map $$(x_1, x_2, x_3, x_4) \rightarrow (x_1 - x_2, x_1 + x_2, x_3 - x_4, x_3 + x_4)$$ yields the point lattice $$\{(x_1, x_2, x_3, x_4) \in \mathbb{Z}^4 \textrm{ }|\textrm{ } x_1 \equiv x_2 \equiv x_3 \equiv x_4 \textrm{ (mod 2)}\}$$ and thus, $D_4$ can be thought of as a 4-dimensional analogue of both the BCC lattice and the FCC lattice.

It is assumed throughout that the null graph is not a valid example of a k-regular graph.

A nonconventional concept that we introduce here, related to the girth, is that of the \textit{spread} of a graph $G$ at depth $d$. It is defined as the number of vertices at a graph distance $d$ from a given vertex $v_0$, minimized over all vertices $v_0$, and is denoted by $s(G, d)$. If $G$ is $k$-regular, then $g(G) \geq 2d+1$ if and only if $s(G, d) = k (k-1)^{d-1}$. Thus, the value of $s(G, d)$ can be used as a "tiebreaker" between $k$-regular subgraphs of the same girth $g \in \{2d - 1, 2d\}$.

\section{Induced subgraphs}

\begin{theorem}
	Suppose $n$ and $k$ are integers satisfying $k > n \geq 2$. If $n$ is odd and $k = n+1$, then the maximal girth of a $k$-regular induced subgraph of the $n$-dimensional grid is 6, which is attained by only one such subgraph up to isometry. Otherwise, the maximal girth is 4.
\end{theorem}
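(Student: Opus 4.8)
The plan is to first pin down what girth values are even attainable. Colouring $\mathbb{Z}^n$ by the parity of the coordinate sum exhibits the grid as bipartite, so every cycle has even length and the girth is even. A $k$-regular induced subgraph with $k\ge 3$ cannot be a forest: a graph of girth exceeding $2r$ has more than $(k-1)^r$ vertices within graph-distance $r$ of any vertex, whereas graph-distance $\le r$ forces $\ell_1$-distance $\le r$, and the number of lattice points in an $\ell_1$-ball of radius $r$ is only polynomial (degree $n$) in $r$; for $k\ge 3$ these conflict for large $r$. Hence the girth is a finite even number $\ge 4$. The next reduction I would make is the observation that in the grid the only $4$-cycles are unit squares (four steps $\pm e_i$ summing to zero with distinct partial sums must use two distinct axes and trace a square). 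Consequently the girth of an induced subgraph on a vertex set $S$ is $\ge 6$ if and only if $S$ contains no four corners of a unit square. Throughout I would classify, at each $v\in S$, its $n$ axes as \emph{full} (both $v\pm e_i\in S$), \emph{half}, or \emph{empty}, with counts $f,h,e$ satisfying $2f+h=k$ and $f+h+e=n$, so that $f-e=k-n$; since $k>n$ this already gives $f\ge 1$.

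The cleanest step, and the one I would do next, rules out $k\ge n+2$ for girth $\ge 6$. Fix $v$ with full axis $i$, move to the neighbour $w=v+e_i\in S$, and note that for every other full axis $j$ of $v$ the squares $v,\,v+e_i,\,v+e_i\pm e_j,\,v\pm e_j$ force $w\pm e_j\notin S$; thus every full axis of $v$ except $i$ is empty at $w$, giving $e_w\ge f_v-1=e_v+(k-n)-1$. When $k\ge n+2$ this means $e_w\ge e_v+1$, and since $w$ still has $f_w\ge 2$ full axes one may iterate, producing an unbounded strictly increasing sequence of empty-axis counts, contradicting $e\le n$. Hence girth $\ge 6$ forces $k=n+1$.

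The heart of the argument is the case $k=n+1$, where $f-e=1$ and the above propagation is only non-strict. Here I would extract the full set of local transfer rules from the no-square condition along each edge $u\sim w=u+e_i$: for every transverse axis $j\ne i$ one cannot have $e_j$ active at both endpoints on the same side, so adjacent vertices share no common full axis other than the edge direction, a full axis propagates along its own line while all transverse half-signs flip, and a half-axis neighbour is forced to acquire an empty axis. The hard part will be showing that these rigid, overlapping constraints assemble into a single periodic defect pattern, that the pattern closes up consistently exactly when $n$ is odd (the parity entering as a cocycle/closing-up condition on how the distinguished full direction must rotate through the axes), and that the rigidity leaves no freedom beyond the symmetries of $\mathbb{Z}^n$, yielding uniqueness up to isometry. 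Once the unique candidate is in hand I would simply exhibit a $6$-cycle in it, which shows its girth is exactly $6$; because every girth-$\ge 6$ example is isometric to this one, girth $\ge 8$ is ruled out automatically and the special-case claim follows.

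Finally, for all remaining pairs $(n,k)$ with $n<k\le 2n$ I would establish existence together with the value $4$ by the product construction $S=\mathbb{Z}^{\,k-n}\times\{0,1\}^{\,2n-k}$, whose induced subgraph is the Cartesian product of a $(k-n)$-dimensional grid with a hypercube $Q_{2n-k}$ and is therefore $k$-regular; as long as at least two of these factors carry an edge (which holds whenever $n\ge 2$) the product contains a unit square, so its girth is $4$. Combined with the structural bound that girth $\ge 6$ is impossible except when $n$ is odd and $k=n+1$, this forces the maximal girth to be $4$ in every other case, completing the proof. The parity-and-uniqueness analysis of the $k=n+1$ case is the decisive obstacle; everything else is either the bipartite/growth bookkeeping or the explicit constructions above.
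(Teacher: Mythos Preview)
Your bipartite/growth preliminaries, the identification of $4$-cycles with unit squares, and the full/half/empty bookkeeping are all sound, and your propagation argument for $k\ge n+2$ is correct, if heavier than necessary (the paper does it in one line: an adjacent pair $u,v$ has at least $2k-4>2n-2$ neighbours off the line $uv$, so two of them share a transverse slot and close a square). The genuine gap is exactly where you flag it: for $k=n+1$ you set up transfer rules and then defer the ``hard part''---the parity obstruction and the uniqueness---to an unspecified later analysis, and you never write down the candidate subgraph. As it stands the proposal does not prove the theorem.

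The point is that this ``hard part'' dissolves once you sharpen the same pigeonhole. Along any edge $uv$ in direction $e_i$, the off-line neighbours of $u$ and $v$ occupy the $2n-2$ transverse slots $\{u+\epsilon e_j,\,v+\epsilon e_j\}$, at most one per slot. With $k=n+1$ the off-line total is already at least $2n-2$, and equality forces \emph{both} endpoints to have their far on-line neighbour present. In your language this says $h=0$ at every vertex: neighbours come in antipodal pairs, so $k=2f$ is even and $n$ is odd---no cocycle or closing-up argument is needed. Uniqueness is then a one-step propagation: along an edge in direction $i$, the $f-1$ other full axes at $u$ become empty at $v$ by the square rule, and a count forces the $f-1$ empty axes of $u$ to be exactly the remaining full axes of $v$, so $v$'s pattern is determined by $u$'s. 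The paper finishes by exhibiting the unique example as $\{x\in\mathbb{Z}^n:\sum_i(-1)^{x_i}=\pm1\}$ and checking that its girth is $6$; you should supply this or an equivalent construction. Your product example for girth-$4$ existence is a welcome addition the paper leaves implicit; just note that for $k=2n$ the hypercube factor is trivial, though $\mathbb{Z}^n$ itself already contains a square when $n\ge2$.
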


\begin{proof}
	If there are two adjacent vertices $u$, $v$ that in total have more than $2n-2$ neighbours not on the line through $u$ and $v$, then it is immediate by the pigeonhole principle that there is a cycle of length 4 through $u$ and $v$. This is the case for any pair of neighbours if $k$ is at least $n+2$, and then the girth must be equal to 4.
	
	If $k = n+1$, then it may be possible to avoid such pairs, but only if the neighbours of each vertex match up in antipodal pairs, which means that $k$ must be even, and so $n$ must be odd. Otherwise, the girth must still be equal to 4.
	
	If $n$ is odd and $k = n+1$, let us assume that the girth is $\geq 6$. Given two adjacent vertices $u$ and $v$, the placement of $v$'s remaining neighbours is uniquely determined by the placement of $u$'s remaining neighbours. It follows that there is at most one $(n+1)$-regular induced subgraph of the $n$-dimensional grid of girth $\geq 6$ up to isometry.
	
	And such a subgraph exists: Let $(x_1, ..., x_n) \in \mathbb{Z}^n$ be included in the vertex set if and only if $\sum (-1)^{x_i} = \pm 1$. It can be verified that the girth is equal to 6.
\end{proof}

\begin{lemma}
	If $n$, $m$ and $k$ are integers with $n \geq m \geq 1$ and $0 \leq k \leq 2n$, and $\Gamma$ is a $k$-regular subgraph of the $n$-dimensional grid, then there exists a unit m-hypercube in the grid that contains at least one vertex from $\Gamma$, and for which the ratio between the number of edges and the number of vertices in its intersection with $\Gamma$ is at least ${km \over 4n}$. Moreover, if it is known that the ratio $\leq {km \over 4n}$ for all unit $m$-hypercubes containing at least one vertex of $\Gamma$, then equality must hold in general, except possibly for a subset of the unit $m$-hypercubes of density 0.
\end{lemma}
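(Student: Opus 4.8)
The lemma asserts something about averages, so my plan is to prove it by a global counting argument rather than by exhibiting a single good hypercube directly. Let me count pairs (or weighted incidences) across all unit $m$-hypercubes in the grid. A unit $m$-hypercube is determined by a choice of $m$ coordinate directions out of $n$, together with a base lattice point; it has $2^m$ vertices. Let me think about what's being compared: edges versus vertices inside the intersection of a hypercube with $\Gamma$.

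Let me think harder about the bookkeeping. Each vertex $v$ of $\Gamma$ lies in many unit $m$-hypercubes. Each edge $e$ of $\Gamma$ (a unit segment in some coordinate direction $i$) lies in those unit $m$-hypercubes whose direction-set includes $i$. So if I sum, over all unit $m$-hypercubes $C$, the number of vertices of $\Gamma$ in $C$, I get each vertex counted once per hypercube containing it; and summing the edge-counts gives each edge counted once per hypercube containing it. The ratio $km/(4n)$ should emerge as exactly the ratio of these two global totals (a per-vertex edge-to-vertex ratio). Let me verify the constant: a fixed vertex belongs to $\binom{n}{m}2^m$ hypercubes (choose $m$ directions, then for each chosen direction decide whether the cube extends in the $+$ or $-$ sense). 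A fixed edge in direction $i$ belongs to the hypercubes that include direction $i$ among their $m$ directions and whose remaining $m-1$ directions each pick a side: that is $\binom{n-1}{m-1}2^{m-1}$. Since $\Gamma$ is $k$-regular, $\#\text{edges} = \tfrac{k}{2}\#\text{vertices}$. The global edge-total over all hypercubes is $\tfrac{k}{2}\#V \cdot \binom{n-1}{m-1}2^{m-1}$ and the global vertex-total is $\#V \cdot \binom{n}{m}2^m$. Their quotient is $\tfrac{k}{2}\cdot \tfrac{\binom{n-1}{m-1}}{\binom{n}{m}}\cdot \tfrac{1}{2} = \tfrac{k}{4}\cdot\tfrac{m}{n} = \tfrac{km}{4n}$, as claimed.

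**From the global average to a single hypercube.**

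With the averaged identity in hand, the existence claim is an averaging (pigeonhole) step: if the global edge-to-vertex ratio equals $km/(4n)$, then at least one hypercube (among those actually containing a vertex of $\Gamma$) must meet or exceed this ratio. A subtlety is that $\Gamma$ may be infinite, so "summing over all hypercubes" is not literally a finite sum. I would handle this by restricting to a large finite box $[-N,N]^n$, running the counting there, and letting $N\to\infty$; the boundary contributes a vanishing fraction of the hypercubes and vertices, so the limiting average is still $km/(4n)$, and a hypercube achieving at least the average exists inside some box. This limiting/density argument is exactly what powers the second sentence as well.

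**The equality/density statement.**

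For the "moreover" clause, suppose the ratio is $\leq km/(4n)$ for every unit $m$-hypercube meeting $\Gamma$. Since the density-weighted average of these ratios equals $km/(4n)$ (the same computation, taken as a limit of finite-box averages), the set of hypercubes strictly below the average must have density zero — otherwise the average would drop strictly below $km/(4n)$, contradicting the computed value. The main obstacle I anticipate is making the averaging rigorous in the infinite setting: precisely defining the density of a set of hypercubes, justifying that boundary effects vanish, and confirming that a nonzero-density subset with strictly smaller ratio would force the limiting average below $km/(4n)$. Once the density framework is fixed, the two conclusions follow from the single weighted-counting identity above.
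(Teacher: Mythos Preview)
Your approach is essentially identical to the paper's: both double-count vertex--hypercube and edge--hypercube incidences using the constants $\binom{n}{m}2^m$ and $\binom{n-1}{m-1}2^{m-1}$, obtain the global ratio $km/(4n)$, and then apply a pigeonhole/averaging step. You are somewhat more careful than the paper in flagging the passage from an infinite graph to a density statement via finite-box truncations, but the underlying argument is the same.
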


\begin{proof}
	Let $S$ denote the set of unit $m$-hypercubes for which the intersection with $\Gamma$ contains at least one vertex. Let $v$ and $e$ denote the expected number of vertices and the expected number of edges, respectively, in the intersection between $\Gamma$ and a unit $m$-hypercube in $S$. Each vertex in $\Gamma$ belongs to ${n\choose m} 2^m$ unit $m$-hypercubes in $S$, and each edge in $\Gamma$ belongs to ${n - 1 \choose m - 1} 2^{m - 1}$ unit $m$-hypercubes in $S$. Each vertex is incident with $k$ edges while each edge is incident with 2 vertices, and hence we have $${e \over v} = {k \over 2} {{n - 1 \choose m - 1} 2^{m - 1} \over {n\choose m} 2^m} = {km \over 4n}$$ and the conclusion follows.
\end{proof}

\begin{lemma}
	Suppose $\Gamma$ is a 6-regular induced subgraph of the 6-dimensional grid containing a vertex $u$ such that the neighbours of $u$ do not match up in antipodal pairs. Then the girth of $\Gamma$ is at most 6.
\end{lemma}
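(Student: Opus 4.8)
The plan is to argue by contradiction, assuming the girth is at least $8$, and to exploit the fact that the $6$-dimensional grid is bipartite (colour a vertex by the parity of its coordinate sum), so that every cycle is even and ``girth $\geq 8$'' is the same as ``no $4$-cycle and no $6$-cycle''. By the spread characterization in the introduction applied with $d=3$ and $k=6$, this is equivalent to $s(\Gamma,3)=6\cdot 5^{2}=150$: the breadth-first exploration from any vertex is genuinely tree-like out to depth $3$, with exactly $6$, $30$ and $150$ vertices at depths $1,2,3$, and in particular no two walks of length at most $3$ from a common start can end at the same vertex. First I would translate $u$ to the origin, and then, using that coordinate permutations and sign flips are isometries of the grid preserving the $k$-regular induced structure, normalize the antipodal-failure hypothesis to $e_1\in N(u)$ but $-e_1\notin N(u)$.

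The next step is to record the local exclusion rules forced by the absence of short cycles. A grid point at distance $1$ from the origin lies in $\Gamma$ if and only if it lies in $N(u)$, and the no-$4$-cycle condition is exactly the statement that no unit square has all four corners in $\Gamma$: for each neighbour $e_1$ and each lateral signed direction $\sigma e_j$ with $j\geq 2$, if $\sigma e_j\in N(u)$ then $e_1+\sigma e_j\notin\Gamma$. Consequently the five neighbours of $e_1$ other than $u$ are confined to the six-element set $\{2e_1\}\cup\{e_1+b : b=\sigma e_j\notin N(u),\ j\geq 2\}$, so $e_1$ carries at least four lateral neighbours in directions left unused (at that sign) by $u$; it is precisely the missing antipode $-e_1$ that frees these lateral directions. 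I would record the analogous constraint at every neighbour of $u$, and note that (since $2a$ plus the number of unmatched signed directions equals $6$, with at least one unmatched direction) the configuration $N(u)$ falls into exactly three isometry-reduced types according to the number $a\in\{0,1,2\}$ of genuine antipodal pairs.

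The heart of the argument is to push to depth $3$ and force a coincidence of two length-$\leq 3$ walks from $u$, i.e. a $6$-cycle. The mechanism is the hexagon $0,\ e_1,\ e_1+b,\ e_1+w+b,\ w+b,\ w,\ 0$, where $w\in N(u)$ is a second neighbour in a different coordinate direction, $b\notin N(u)$ is a lateral direction available to both $e_1$ and $w$, and the apex $e_1+w+b$ is a depth-$3$ vertex: its six vertices are distinct and all six consecutive differences are unit vectors, so as soon as $e_1+b,\ w+b$ and the apex all lie in $\Gamma$ we obtain the forbidden $6$-cycle. Because the asymmetry forces each neighbour of $u$ to send four of its five outgoing edges into the \emph{shared} pool of five available lateral directions, many such near-hexagons arise, and I would show by counting that the $30$ depth-$2$ vertices cannot spawn their $150$ required depth-$3$ neighbours within the exclusion-constrained candidate sets without an apex of one neighbour coinciding with a legitimate neighbour of another, producing the cycle. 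I expect the main obstacle to be this final finite verification: carrying the pigeonhole through uniformly across the three cases $a\in\{0,1,2\}$ while keeping the fourth-corner exclusions consistent. It is worth stressing that the missing antipode is essential here, since in the fully antipodal case the shared lateral pool does not arise and no such collision is forced, which is consistent with the girth possibly exceeding $6$ there.
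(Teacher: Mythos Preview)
Your setup is correct and your hexagon $0,e_1,e_1+b,e_1+w+b,w+b,w,0$ is precisely the $6$-cycle the paper produces. The gap is the counting step, which you leave as a ``final finite verification'' via a global depth-$3$ count split into the three cases $a\in\{0,1,2\}$. That strategy is not obviously sufficient: the grid has $292$ lattice points at $\ell_1$-distance $3$ from $u$, so the requirement of $150$ distinct depth-$3$ vertices does not by itself force a collision, and you have not explained which additional exclusions would close the gap. Your phrase ``shared pool of five available lateral directions'' also hides a subtlety: different neighbours $u_i$ of $u$ generally have different available pools, so the pooling has to be argued rather than asserted.

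The idea you are missing, and which the paper uses to bypass the case split entirely, is to analyse the neighbour $v=e_1$ symmetrically with $u$. Since $-e_1\notin\Gamma$, the five lateral directions $a_1,\dots,a_5$ used by $u$ and the (four or five) lateral directions $b_1,\dots$ used by $v$ are disjoint by the no-$4$-cycle condition, hence together they exhaust (or nearly exhaust) all ten signed directions $\pm e_2,\dots,\pm e_6$. It follows that every lateral second-neighbour of $u$ through $u_i$ is of the form $a_i+b_j$, and every lateral second-neighbour of $v$ through $v_j$ is of the form $e_1+a_i+b_j$. One then simply counts: in the case $2e_1\notin\Gamma$ there are at least $15$ such vertices on each side but at most $24$ possible values of $a_i+b_j$, so some value $c$ is realised on both sides, and $0,a_i,c,e_1+c,e_1+b_j,e_1,0$ is your hexagon. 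The case $2e_1\in\Gamma$ is handled by the same count with slightly different numbers ($10+12>20$). This replaces your three-case global analysis with a single local pigeonhole at the asymmetric edge $uv$.
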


\begin{proof}
	We can assume without loss of generality that $u = (0, 0, 0, 0, 0, 0)$, and that $v = (1, 0, 0, 0, 0, 0)$ is in the vertex set of $\Gamma$ while $(-1, 0, 0, 0, 0, 0)$ is not. Thus, $u$ has five other neighbours $u_i = (0, a_{i_2}, a_{i_3}, a_{i_4}, a_{i_5}, a_{i_6})$ with one $a_{i_k} = \pm 1$ and the others equal to 0, for each $i \in \{1, ..., 5\}$. The point (2, 0, 0, 0, 0, 0) may or may not be in the vertex set of $\Gamma$, and $v$ has four or five other neighbours $v_j = (1, b_{j_2}, b_{j_3}, b_{j_4}, b_{j_5}, b_{j_6})$ accordingly.
	
	If $a_{i_k} = b_{j_k}$ for all $k \in \{2, ..., 6\}$ for some $(i, j)$, then the girth is 4, so we will assume that this is not the case henceforth. Likewise, we can assume that there are no points on the form $$(0, a_{i_2^{(1)}} + a_{i_2^{(2)}}, a_{i_3^{(1)}} + a_{i_3^{(2)}}, a_{i_4^{(1)}} + a_{i_4^{(2)}}, a_{i_5^{(1)}} + a_{i_5^{(2)}}, a_{i_6^{(1)}} + a_{i_6^{(2)}})$$ (provided $u_{i^{(1)}} \neq \pm u_{i^{(2)}}$) in the vertex set of $\Gamma$, nor any on the form $$(1, b_{j_2^{(1)}} + b_{j_2^{(2)}}, b_{j_3^{(1)}} + b_{j_3^{(2)}}, b_{j_4^{(1)}} + b_{j_4^{(2)}}, b_{j_5^{(1)}} + b_{j_5^{(2)}}, b_{j_6^{(1)}} + b_{j_6^{(2)}})$$ (with corresponding exceptions).
	
	If $(2, 0, 0, 0, 0, 0)$ is not in the vertex set of $\Gamma$, then each vertex $u_i$ has at least three neighbours on the form $$(0, a_{i_2} + b_{j_2}, a_{i_3} + b_{j_3}, a_{i_4} + b_{j_4}, a_{i_5} + b_{j_5}, a_{i_6} + b_{j_6})$$ other than $u$. This is because the only possible neighbours $u_i$ could have that are not on this form are $(-1, a_{i_2}, a_{i_3}, a_{i_4}, a_{i_5}, a_{i_6})$ and $(0, 2a_{i_2}, 2a_{i_3}, 2a_{i_4}, 2a_{i_5}, 2a_{i_6})$, in addition to $u$. Thus, there are at least 15 vertices on this form. Likewise, each vertex $v_j$ has at least three neighbours on the form $$(1, a_{i_2} + b_{j_2}, a_{i_3} + b_{j_3}, a_{i_4} + b_{j_4}, a_{i_5} + b_{j_5}, a_{i_6} + b_{j_6})$$ other than $v$, for a total of at least 15 vertices. With at most 24 possible combinations $$(a_{i_2} + b_{j_2}, a_{i_3} + b_{j_3}, a_{i_4} + b_{j_4}, a_{i_5} + b_{j_5}, a_{i_6} + b_{j_6})$$ ($5^2$ minus at least one combination that yields $(0, 0, 0, 0, 0)$), it follows that there must exist a cycle of length at most 6.
	
	The remaining case is when (2, 0, 0, 0, 0, 0) is in the vertex set of $\Gamma$. Now there are only four vertices $v_j$, but we can assume that neither one of them has a neighbour $(2, b_{j_2}, b_{j_3}, b_{j_4}, b_{j_5}, b_{j_6})$. Each vertex $u_i$ now has at least two neighbours on the form $$(0, a_{i_2} + b_{j_2}, a_{i_3} + b_{j_3}, a_{i_4} + b_{j_4}, a_{i_5} + b_{j_5}, a_{i_6} + b_{j_6})$$ other than $u$, for a total of at least 10 vertices, and each vertex $v_j$ has at least three neighbours on the form $$(1, a_{i_2} + b_{j_2}, a_{i_3} + b_{j_3}, a_{i_4} + b_{j_4}, a_{i_5} + b_{j_5}, a_{i_6} + b_{j_6})$$ other than $v$, for a total of at least 12 vertices. There are at most 20 possible combinations $$(a_{i_2} + b_{j_2}, a_{i_3} + b_{j_3}, a_{i_4} + b_{j_4}, a_{i_5} + b_{j_5}, a_{i_6} + b_{j_6})$$ and again it follows that there must exist a cycle of length at most 6.
\end{proof}

\begin{theorem}
	Suppose $n$ is an integer $\geq 3$. The maximal girth of an $n$-regular induced subgraph of the $n$-dimensional grid is 10 for $n = 3$, 8 for $n = 4$ and $n = 6$, and 6 for all other integers $n \geq 3$. Moreover, the number of isometry classes of $n$-regular induced subgraphs of maximal girth is 4 for $n = 3$, 1 for $n = 6$, and unbounded for all other integers $n \geq 3$.
\end{theorem}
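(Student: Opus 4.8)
The plan is to separate the claim into the sporadic case $n=3$, the two ``good'' even cases $n\in\{4,6\}$, and the remaining values $n=5$ and $n\ge 7$, after three reductions. Since adjacency in the grid flips the parity of $\sum_i x_i$, the grid is bipartite and every girth is even, so I only need to find the least even value that occurs. Next, for an induced subgraph, girth $\ge 6$ is equivalent to containing no unit square, girth $\ge 8$ additionally to containing no induced ``hexagon'' (the $6$-cycle spanning six of the eight corners of a unit $3$-cube), and girth $\ge 10$ to also containing no induced $8$-cycle. Finally, girth $\ge 2d+1$ is equivalent to $s(\Gamma,d)=n(n-1)^{d-1}$, i.e.\ to the radius-$d$ ball around every vertex being a tree; I will use this spread formulation to phrase the forbidden short cycles as collisions among lattice points reachable by short non-backtracking walks. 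The case $n=3$ (girth $10$, four isometry classes) is precisely the theorem of \cite{Haugland:2003} and is quoted directly.

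For the upper bounds I would argue locally around a vertex $u$, recording for each vertex the set of $n$ signed axis directions it uses and splitting on whether these directions match up in antipodal pairs. The no-square condition forces, for directions $e_i,e_j$ used at $u$, the corner $u+e_i+e_j$ out of $\Gamma$ and hence the $n(n-1)$ distance-$2$ points distinct, while girth $\ge 8$ forces the $n(n-1)^2$ points at distance $3$ to be pairwise distinct lattice points as well. When some vertex is \emph{not} antipodally matched I expect to reproduce the counting of Lemma 2: the second neighbourhoods of $u$ and of an adjacent $v$ are each large while the admissible corner-combinations are few, so two short walks reconverge and a cycle of length $\le 6$ appears. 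Lemma 2 is exactly this for $n=6$, and I would carry out the analogous bookkeeping for the other values, using Lemma 1 with $m=2,3$ to supply the density estimates that bound the number of available corners. The antipodally matched case (possible only for even $n$) assigns to each vertex an $n/2$-subset of axes and reduces the existence of a girth-$8$ graph to a constraint-satisfaction problem on how these subsets may sit on adjacent lattice points; I expect this system to be solvable only for $n\in\{4,6\}$ and, for those two, to close up after two further steps into an $8$-cycle, giving girth exactly $8$ rather than $\ge 10$.

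For existence and the isometry counts I would produce explicit families. Girth-$6$ $n$-regular subgraphs for every $n\ge 3$ arise as level sets $\{x:\sum_i c_i x_i\in T \pmod{M}\}$ of a single linear form in the style of $G_3$, with $M$, the weights $c_i$ and the residue set $T$ tuned so that each retained point keeps exactly $n$ of its $2n$ neighbours and no unit square closes; for $n=4,6$ the same device, suitably refined, reaches girth $8$. To obtain \emph{unboundedly} many isometry classes for $n=4$ and for $n=5,\ n\ge 7$, I would take a family periodic in the last coordinate with minimal period $p$ and independent admissible choices within each period; the minimal period is an isometry invariant, so distinct $p$ yield non-isometric graphs. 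For the uniqueness at $n=6$ I would argue as in the proof of Theorem 1: once Lemma 2 forces antipodal matching at every vertex, the directions at each neighbour of $u$ are determined by those at $u$, so the whole subgraph is fixed up to isometry, and the explicit construction shows this unique candidate indeed has girth $8$.

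The main obstacle, I expect, is delimiting the exceptional set exactly as $\{3,4,6\}$. Concretely this means proving two different kinds of statement: that the antipodally matched local pattern globalizes for $n=4,6$ but for no other even $n$, and that for $n=5$ and odd $n\ge 7$ (where antipodal matching is impossible) a short cycle is unavoidable. The latter is delicate because for large $n$ the naive pigeonhole behind Lemma 2 no longer forces a collision, so I anticipate needing a sharper accounting of which distance-$3$ lattice points are actually reachable under the corner- and hexagon-exclusions, rather than a uniform counting bound; the small boundary cases $n=4,5,6,7$ may in addition require a finite, possibly computer-assisted, enumeration of admissible local configurations.
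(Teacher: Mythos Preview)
Your decomposition into $n=3$, $n\in\{4,6\}$, and $n\in\{5\}\cup\{n\ge 7\}$ matches the paper, and quoting \cite{Haugland:2003} for $n=3$ is correct. But the mechanism you propose for the upper bounds is not the one that actually works, and the gap is real.

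The paper does not obtain the upper bounds by Lemma-2-style pigeonhole on second and third neighbourhoods, nor by Lemma 1 with $m=2,3$. The key move is to apply Lemma 1 with $m$ \emph{large}. For $n=4$ one takes $m=4$, so the ratio $km/(4n)=1$ guarantees a unit $4$-hypercube whose intersection with $\Gamma$ has at least as many edges as vertices and hence contains a cycle; since the longest induced cycle in the $4$-cube has length $8$ (the snake-in-the-box bound), girth $\le 8$ follows immediately. For $n\ge 5$ the paper takes $m=5$: Lemma 1 then forces almost every unit $5$-hypercube to meet $\Gamma$ in an induced subgraph with edge-to-vertex ratio at least $5/4$, and a (computer) check shows there is a unique such subgraph $H$ of the $5$-cube with girth $\ge 8$, of maximum degree $3$. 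For $n\ge 7$ this is contradicted at once, because through any vertex of $\Gamma$ a positive fraction of the incident unit $5$-hypercubes see that vertex with degree $\ge 4$; for $n=5$ a further finite computer check on how copies of $H$ glue finishes the job. Your plan to extend the Lemma 2 counting to all $n$ has no replacement for this structural step; you already observe that the naive pigeonhole fails for large $n$, and the ``sharper accounting of distance-$3$ points'' you propose does not supply the rigidity that the $5$-cube graph $H$ provides.

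Your uniqueness argument for $n=6$ also does not go through as written. Antipodal matching (which Lemma 2 does force) only assigns to each vertex a $3$-subset of the six axes; it does \emph{not} determine the neighbour's triple from $u$'s triple the way the Theorem 1 argument does, since there $k=n+1$ and the off-line neighbours of $u$ occupy all $n-1$ remaining perpendicular directions and squeeze $v$'s choices completely. Indeed, for $n=4$ the very constructions you want for ``unboundedly many'' classes are antipodally matched, so determination from a single vertex cannot hold in general. The paper instead notes that antipodal matching makes the intersection of $\Gamma$ with every unit $6$-hypercube $3$-regular, verifies by computer that the $6$-cube has a unique $3$-regular induced subgraph of girth $\ge 8$ up to isometry, and uses its central symmetry to propagate the intersection uniquely from one hypercube to the next.
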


\begin{proof}
	The case $n = 3$ was solved in \cite{Haugland:2003}, as already mentioned.

	For $n = 4$, it follows from Lemma 1 that there must exist a unit 4-hypercube such that the intersection with the subgraph contains at least as many edges as vertices, which implies that it must contain a cycle. It is known that the maximum length of an induced cycle of a unit 4-hypercube is 8 (confer \cite{Klee:1970}). Here is a way to construct infinitely many different examples for which this is attained.

	For each $i \in \mathbb{Z}$, let $U_i$ be one of the sets
	$$\{ (x_1, x_2, x_3) \in \mathbb{Z}^3 \textrm{ }|\textrm{ } x_1 \equiv 0 \textrm{ (mod 2)}, x_2 \equiv 1 \textrm{ (mod 2)} \}$$
	$$\{ (x_1, x_2, x_3) \in \mathbb{Z}^3 \textrm{ }|\textrm{ } x_1 \equiv 1 \textrm{ (mod 2)}, x_2 \equiv 0 \textrm{ (mod 2)} \}$$
	$$\{ (x_1, x_2, x_3) \in \mathbb{Z}^3 \textrm{ }|\textrm{ } x_1 \equiv 0 \textrm{ (mod 2)}, x_3 \equiv 1 \textrm{ (mod 2)} \}$$
	$$\{ (x_1, x_2, x_3) \in \mathbb{Z}^3 \textrm{ }|\textrm{ } x_1 \equiv 1 \textrm{ (mod 2)}, x_3 \equiv 0 \textrm{ (mod 2)} \}$$
	$$\{ (x_1, x_2, x_3) \in \mathbb{Z}^3 \textrm{ }|\textrm{ } x_2 \equiv 0 \textrm{ (mod 2)}, x_3 \equiv 1 \textrm{ (mod 2)} \}$$
	$$\{ (x_1, x_2, x_3) \in \mathbb{Z}^3 \textrm{ }|\textrm{ } x_2 \equiv 1 \textrm{ (mod 2)}, x_3 \equiv 0 \textrm{ (mod 2)} \}$$
	\noindent such that $U_j \cap U_{j+1}$ is empty for any $j \in \mathbb{Z}$. Consider the induced subgraph with vertex set given by
	$$\{ (x_1, x_2, x_3, x_4) \in \mathbb{Z}^4 \textrm{ }|\textrm{ } x_1 \equiv x_2 \equiv x_3 \textrm{ (mod 2)} \textrm{ or } (x_1, x_2, x_3) \in U_{x_4} \}$$ The girth of this subgraph is 8, regardless of $\{U_i\}_{i \in \mathbb{Z}}$.

	Suppose $\Gamma^{(6)}$ is a 6-regular induced subgraph of the 6-dimensional grid of girth at least 8. By Lemma 2, the neighbours of each vertex match up in antipodal pairs. It follows that the intersection of $\Gamma^{(6)}$ with any unit 6-hypercube (if non-empty) must be 3-regular. As can be verified with computer assistance, there is only one 3-regular induced subgraph of the graph of a unit 6-hypercube of girth $\geq 8$ up to isometry. It is symmetrical around the centre, and it follows that the intersection with any unit 6-hypercube is unique given the intersection with any adjacent unit 6-hypercube. We conclude that there is only one such graph $\Gamma^{(6)}$ up to isometry. One possible representative has a vertex set consisting of all integer lattice points $(x_1, x_2, x_3, x_4, x_5, x_6) \in \mathbb{Z}^6$ that do \textit{not} satisfy any of the following sets of conditions:
	$$x_2 \not\equiv x_3 \equiv x_4 \equiv x_5 \not\equiv x_6 \textrm{ (mod 2)}$$
	$$x_3 \equiv x_4 \not\equiv x_5 \not\equiv x_6 \equiv x_1 \textrm{ (mod 2)}$$
	$$x_4 \not\equiv x_5 \equiv x_6 \equiv x_1 \not\equiv x_2 \textrm{ (mod 2)}$$
	$$x_5 \equiv x_6 \not\equiv x_1 \not\equiv x_2 \equiv x_3 \textrm{ (mod 2)}$$
	$$x_6 \not\equiv x_1 \equiv x_2 \equiv x_3 \not\equiv x_4 \textrm{ (mod 2)}$$
	$$x_1 \equiv x_2 \not\equiv x_3 \not\equiv x_4 \equiv x_5 \textrm{ (mod 2)}$$
	The girth of this graph is 8. The intersection with a unit 6-hypercube is isomorphic to the cubic symmetric graph F40A (the bipartite double cover of the dodecahedron graph of girth 8), and $\Gamma^{(6)}$ is a covering graph of F40A with double edges.

	The remaining cases are $n = 5$ and $n \geq 7$. The girth of the induced subgraph with vertex set
	$$V_k = \{ (x_1,..., x_n) \in \mathbb{Z}^n \textrm{ }|\textrm{ } x_1 + ... + x_n \in \{ k, k+1 \} \}$$
	is 6 for any $n \geq 3$ and any integer $k$, and we can easily find infinitely many isometrically distinct unions $V_{k_1} \cup V_{k_2} \cup ...$ that also induce $n$-regular subgraphs of girth 6, so it suffices to verify that it is not possible to attain a girth of 8 in those cases.

	Suppose $n \geq 5$, and that $\Gamma^{(5)}$ is an $n$-regular induced subgraph of the $n$-dimensional grid of girth $\geq 8$. The situation is quite similar to what we just considered for $n = k = 6$: There is only one non-empty induced subgraph $H$ of the unit 5-hypercube of girth $\geq 8$ with at least 5/4 as many edges as vertices, up to isometry. The intersection between $\Gamma^{(6)}$ and any unit 5-hypercube is isometric to $H$, and the degree of each vertex is either 2 or 3. By Lemma 1, all intersections between $\Gamma^{(5)}$ and a unit 5-hypercube must also be isometric to $H$, except possibly for a subset of the unit 5-hypercubes of density 0.
	
	When $n \geq 7$, then just by considering the unit 5-hypercubes that are incident with any given vertex in $\Gamma^{(5)}$, we can see that a positive portion of them contain at least one vertex of degree $\geq 4$ when intersected with $\Gamma^{(5)}$. Hence, they cannot all be isometric to $H$.
	
	When $n = 5$, it is not quite as simple to check that it is impossible that almost all intersections with a unit 5-hypercube is isometric to $H$, but it can be verified with computer assistance.

	This completes the proof, admittedly with some steps relying on computer verification.
\end{proof}

For $k < n$, most cases are open. We will take a look at some examples of $k$-regular induced subgraphs of high girth, even though maximality may not have been established as of yet.

For $n = 4$, $k = 3$, a computer search found a number of different examples of induced subgraphs of girth 12. One of them attains a spread of 94 at depth 6 (while a girth of at least 14 would give the value $3 \times 2^5 = 96$). The vertex set consists of
$$(x_1, x_2, x_3, x_4)$$
$$(x_1 + 1, x_2, x_3, x_4)$$
$$(x_1 + 1, x_2, x_3 + 1, x_4)$$
$$(x_1 + 2, x_2, x_3, x_4)$$
$$(x_1 + 2, x_2, x_3, x_4 + 1)$$
$$(x_1, x_2, x_3 - 1, x_4)$$
$$(x_1, x_2, x_3 - 2, x_4)$$
$$(x_1, x_2, x_3, x_4 - 1)$$
for all points $(x_1, x_2, x_3, x_4)$ in the point lattice spanned by \{[1, 1, 1, 1], \newline [1, 1, -1, -1], [1, 0, -1, 2], [3, 0, 1, 0]\}.

In view of how close this is, in a sense, to a girth of 14, it is perhaps not surprising that a girth of 14 can be attained if $\mathbb{Z}^4$ is replaced by $D_4$ as our point lattice $\Lambda$. In that case, one example of a cubic induced subgraph of girth 14 is the one with a vertex set consisting of
$$(x_1, x_2, x_3, x_4)$$
$$(x_1, x_2, x_3 + 1, x_4 - 1)$$
$$(x_1, x_2 - 1, x_3, x_4 + 1)$$
$$(x_1, x_2 + 1, x_3 - 1, x_4)$$
$$(x_1 - 1, x_2, x_3 + 1, x_4 - 2)$$
$$(x_1 + 1, x_2, x_3 + 2, x_4 - 1)$$
$$(x_1 - 1, x_2 - 1, x_3, x_4 + 2)$$
$$(x_1 + 1, x_2 - 3, x_3, x_4)$$
for all points $(x_1, x_2, x_3, x_4)$ in the point lattice spanned by \{[2, 2, 0, 0], \newline [0, 0, 2, 2], [2, -2, 3, -1], [2, -2, -1, 3]\}. The minimum distance between non-adjacent vertices is $\sqrt{6}$, while the lower bound for induced subgraphs in general is 2.

For $n = 5$, $k = 3$, there is an induced subgraph of girth 16. The vertex set is given by
$$(x_1, x_2, x_3, x_4, x_5)$$
$$(x_1 + 1, x_2, x_3, x_4, x_5)$$
$$(x_1 + 1, x_2, x_3 + 1, x_4, x_5)$$
$$(x_1 + 1, x_2, x_3 + 2, x_4, x_5)$$
$$(x_1, x_2 - 1, x_3, x_4, x_5)$$
$$(x_1, x_2 - 1, x_3, x_4 + 1, x_5)$$
$$(x_1, x_2 - 1, x_3, x_4 + 2, x_5)$$
$$(x_1, x_2 + 1, x_3, x_4, x_5)$$
$$(x_1, x_2 + 1, x_3, x_4, x_5 + 1)$$
$$(x_1, x_2 + 1, x_3, x_4, x_5 + 2)$$
$$(x_1, x_2 - 2, x_3, x_4 + 1, x_5)$$
$$(x_1, x_2 + 2, x_3, x_4, x_5 + 1)$$
for all points $(x_1, x_2, x_3, x_4)$ in the point lattice spanned by \{[2, 0, 1, 0, -1], \newline[2, 0, 0, -1, 1], [1, 3, -1, 0, 1], [0, 0, 1, 1, 1], [0, 0, 0, 3, 0]\}.

For $n = 5$, $k = 4$, it can be shown that the maximal girth is 10. The induced subgraph with vertex set given by
$$\{(x_1, x_2, x_3, x_4, x_5) \in \mathbb{Z}^5 \textrm{ }|\textrm{ } x_1 + 2x_2 + 3x_3 + 4x_4 + 5x_5 \equiv 1, 3, 4, 5 \textrm{ or } 9 \textrm{ (mod 11)}\}$$
is one example that attains a girth of 10. (The permissible residues are exactly the quadratic residues, but there are of course other, equivalent choices.) \newline Another example consists of all integer lattice points $(x_1, x_2, x_3, x_4, x_5) \in \mathbb{Z}^5$ for which
$$(x_1 - x_5, x_2 - x_5, x_3 - x_5, x_4 - x_5) \equiv (0, 0, 0, 0), (1, 0, 0, 0), (2, 0, 0, 0),$$ $$(1, 1, 0, 0), (1, 2, 0, 0), (1, 1, 1, 0), (0, 2, 1, 0), (0, 1, 2, 0), (1, 1, 2, 0), (2, 1, 2, 0),$$ $$(2, 0, 0, 1), (0, 2, 0, 1), (2, 0, 1, 1), (1, 1, 1, 1), (0, 2, 1, 1), (0, 0, 2, 1), (1, 0, 2, 1),$$ $$(2, 0, 2, 1), (0, 1, 2, 1), (0, 2, 2, 1), (2, 0, 0, 2), (2, 1, 0, 2), (2, 2, 0, 2), (1, 0, 1, 2),$$ $$(1, 1, 1, 2), (0, 2, 1, 2), (1, 2, 1, 2), (2, 2, 1, 2), (0, 1, 2, 2) \textrm{ or } (2, 2, 2, 2) \textrm{ (mod 3)}$$
It is a covering graph of the graph of the icosidodecahedron.

The claim that 10 is maximal can be verified with computer assistance in the following manner. First, verify that the girth of an induced subgraph of the unit 5-hypercube with more edges than vertices is at most 10. Suppose $\Gamma$ is a 4-regular induced subgraph of the 5-dimensional grid of girth at least 12. By Lemma 1, there exists a unit 5-hypercube $H$ such that the intersection between $\Gamma$ and $H$, or any unit 5-hypercube adjacent to $H$, contains the same number of edges and vertices. Let $S$ be the set of all induced subgraphs of a unit 5-hypercube with the same number of edges and vertices, and no cycles of length $\leq 10$. These are the initial candidates for the intersection $\Gamma \cap H$. For each element $h$ of $S$, check if it is possible to create an induced subgraph of a $3 \times 1 \times 1 \times 1 \times 1$ orthotope of girth $\geq 12$ such that the intersection with the centre 5-hypercube is identical to $h$, and the other two are also elements of $S$. If this is not possible, $h$ is eliminated from $S$. A few iterations of this elimination process leaves $S$ empty, and we conclude that the maximal girth is 10.

For $n = 6$, $k = 4$, the girth of the following induced subgraph is 12. The vertex set consists of
$$(x_1, x_2, x_3, x_4, x_5, x_6)$$
$$(x_1 + 1, x_2, x_3, x_4, x_5, x_6)$$
$$(x_1 + 2, x_2, x_3, x_4, x_5, x_6)$$
$$(x_1, x_2, x_3, x_4 + 1, x_5, x_6)$$
$$(x_1, x_2 + 1, x_3, x_4 + 1, x_5, x_6)$$
$$(x_1, x_2 + 2, x_3, x_4 + 1, x_5, x_6)$$
$$(x_1, x_2, x_3, x_4 + 2, x_5, x_6)$$
$$(x_1, x_2, x_3 + 1, x_4 + 2, x_5, x_6)$$
$$(x_1, x_2, x_3 + 2, x_4 + 2, x_5, x_6)$$
for all $(x_1, x_2, x_3, x_4, x_5, x_6) \in \mathbb{Z}^6$ satisfying
$$x_1 + x_2 + x_3 \equiv 0 \textrm{ (mod 3)}$$
$$x_4 + x_5 + x_6 \equiv 0 \textrm{ (mod 3)}$$
$$x_1 + x_4 \equiv x_2 + x_5 \textrm{ (mod 3)}$$

We round off this section with a parameterized example in which $n=k^2$.

\begin{theorem}
	Let $k \geq 3$, and suppose $\{ p_i \}_{1 \leq i \leq 2k}$ is a set of positive integers such that the differences $p_i - p_j$ with $i \neq j$ are all distinct. (For example, one could take $p_i = 2^{i-1}$ $\forall$ $i$.) Let $a_{k (i - 1) + j} = |p_{2i} - p_{2j - 1}|$ for $1 \leq i, j \leq k$. Then the induced subgraph $\Gamma^{(k^2)}$ of the $k^2$-dimensional grid with vertex set given by $$\{(x_1, x_2, ..., x_{k^2}) \in \mathbb{Z}^{k^2} \textrm{ }|\textrm{ } a_1 x_1 + a_2 x_2 + ... + a_{k^2} x_{k^2} \in \{ p_1, p_2, ..., p_{2k}\}\}$$ is $k$-regular of girth 12.
\end{theorem}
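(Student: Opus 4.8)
The plan is to recognize $\Gamma^{(k^2)}$ as a covering graph of the complete bipartite graph $K_{k,k}$ and to read off both the regularity and the girth from that structure. Write $L(x) = a_1 x_1 + \cdots + a_{k^2} x_{k^2}$ and call $L(x)$ the \emph{value} of a vertex $x$; by definition every vertex has value in $P = \{p_1, \dots, p_{2k}\}$. Moving from $x$ along $\pm e_m$ changes the value by $\pm a_m$, where $a_m = |p_{2i} - p_{2j-1}|$ for $m = k(i-1)+j$. First I would use the hypothesis that the differences $p_i - p_j$ are all distinct to show that, from a vertex of value $p_{2j-1}$, for each $i$ there is \emph{exactly one} choice of direction and sign landing back in $P$, namely the step to value $p_{2i}$ in direction $e_m$ with $m = k(i-1)+j$; the distinctness forbids any step to a value of the same parity and any second valid step to the same partner. (This is essentially the antipodal/pigeonhole bookkeeping already used in Theorem 1 and Lemma 1.) Hence every vertex has degree exactly $k$, the graph is bipartite with the two parts consisting of the odd- and even-indexed values, and the map sending $x$ to its value is a covering map onto $K_{k,k}$, the $k^2$ coordinate directions corresponding bijectively to the $k^2$ edges.

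Next I would translate girth into a statement about closed walks in $K_{k,k}$. A cycle in $\Gamma^{(k^2)}$ is a closed walk whose displacement telescopes to $0$; since the basis vectors $e_1, \dots, e_{k^2}$ are independent and each edge of $K_{k,k}$ carries its own coordinate, this is equivalent to every edge being traversed equally often in each direction, a condition I will call \emph{balanced}. Likewise, because two vertices of the lifted walk coincide precisely when the intervening contiguous subwalk is balanced, the walk is a \emph{simple} cycle if and only if no proper contiguous subwalk is balanced. Thus $g(\Gamma^{(k^2)})$ equals the minimum length of a reduced, balanced closed walk in $K_{k,k}$ having no proper contiguous balanced subwalk, and the problem becomes purely combinatorial in $K_{k,k}$.

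For the lower bound $g \geq 12$ I would first record the key structural lemma: the set $H$ of edges actually used must have minimum degree at least $2$, since a degree-$1$ vertex of $H$ forces the walk to leave along the edge it arrived on, producing a balanced subwalk of length $2$. Consequently $H$ is a connected bipartite graph of minimum degree $\geq 2$, so it contains a cycle and hence has at least four edges, and each used edge occurs an even number of times. A short case analysis on the possible supports and multiplicities then rules out lengths $4, 6, 8, 10$: the only relevant support on few edges is a $4$-cycle $C_4$, and every non-backtracking closed walk on $C_4$ merely repeats one of its two directed $4$-cycles, so it has length divisible by $4$ and traverses each edge in a single direction, hence is never balanced.

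Finally, for $g \leq 12$ I would exhibit an explicit cycle supported on a copy of $K_{2,3}$ inside $K_{k,k}$ --- which exists exactly because $k \geq 3$. Taking three even values $p_{2i_1}, p_{2i_2}, p_{2i_3}$ and two odd values $p_{2j_1-1}, p_{2j_2-1}$, the symmetric digraph of this $K_{2,3}$ admits a non-backtracking Eulerian circuit of length $12$ (traversing all six edges in both directions) whose twelve partial displacement vectors are pairwise distinct; lifting it yields a $12$-cycle, and the reduction to $K_{2,3}$ is precisely why $k = 2$ cannot achieve girth $12$ while $k \geq 3$ can. I expect the main obstacle to be the lower-bound case analysis: controlling supports in which some edge is used four or more times (so that the total length can stay small while the support degenerates), and verifying that the chosen $K_{2,3}$ circuit genuinely has no proper balanced subwalk rather than merely avoiding immediate backtracks. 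Everything else is bookkeeping driven by the distinct-differences hypothesis.
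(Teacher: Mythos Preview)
Your plan is correct and takes a genuinely different route from the paper. Both arguments begin from the same observation---that $L$ projects $\Gamma^{(k^2)}$ onto the value set $P$, and that the distinct-differences hypothesis makes each pair $(p_{2i},p_{2j-1})$ correspond to a unique coordinate---but from there they diverge. The paper pairs each step $\overrightarrow{v_iv_{i+1}}$ with a cancelling partner $\overrightarrow{v_jv_{j+1}}$ and argues directly on the index gap $|i-j|$: it is odd by bipartiteness of the value sequence, cannot be $1$ or $3$ because either forces $v_t=v_{t+2}$ for some $t$, and a short ad hoc observation then excludes the pattern required at $g=10$. You instead recast the question as lifting reduced balanced closed walks from $K_{k,k}$ and run a support analysis. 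Your lower bound becomes clean once you state explicitly the elementary fact that a connected simple bipartite graph of minimum degree $\ge 2$ has either exactly four edges (and is then $C_4$) or at least six: this dissolves the ``edges used four or more times'' obstacle you flagged, since high multiplicity only matters when $|E(H)|\le 4$, and on $C_4$ every reduced closed walk winds in a single direction and is never balanced. For the upper bound, your $K_{2,3}$ Eulerian circuit does exist and can be checked to have no proper balanced contiguous subwalk; this matches the paper's explicit $12$-cycle, which is described there via a pattern of value-equalities $v_0\sim v_6\sim v_{12}$, $v_1\sim v_5\sim v_9$, etc. The paper's argument is shorter and entirely self-contained; yours is more structural and would transfer more readily to other base graphs or girth targets. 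The parenthetical pointer to Theorem~1 and Lemma~1 is a non sequitur and should be removed.
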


\begin{proof}
	It is immediate from the definition of $a_{k (i - 1) + j}$ that $\Gamma^{(k^2)}$ is $k$-regular.
	
	Suppose the girth of $\Gamma^{(k^2)}$ is $g$. Then there is some sequence $$\{ v_0, v_1, ..., v_g = v_0, v_{g+1} = v_1, ..., v_{2g} = v_0 \}$$ of vertices in $\Gamma^{(k^2)}$ such that $v_i$ and $v_{i+1}$ are adjacent and $v_i \neq v_{i+2}$ for $i \leq g-1$. It must be possible to pair each $i \leq g-1$ with some $j \leq g-1$ such that $\overrightarrow{v_i v_{i+1}} = -\overrightarrow{v_j v_{j+1}}$, in order that $$\sum_{i=0}^{g-1} \overrightarrow{v_i v_{i+1}} = 0$$ It follows that the value of $$a_1 x_1 + a_2 x_2 + ... + a_{k^2} x_{k^2}$$ is the same for the coordinates $(x_1, x_2, ...)$ of $v_i$ and for those of $v_{j+1}$; a relation which we denote by $i \sim j+1$ (and accordingly $i+1 \sim j$). Thus, $| i-j |$ is an odd integer. It can not be 1 or 3 due to the restriction $v_i \neq v_{i+2}$, so we have $| i-j | \geq 5$, which implies $g \geq 10$. Moreover, if $\overrightarrow{v_i v_{i+1}} = -\overrightarrow{v_j v_{j+1}}$, then again since $i \sim j+1$,  $\overrightarrow{v_{i+1} v_{i+2}} \neq - \overrightarrow{v_{j+1} v_{j+2}}$ which rules out the possibility $g=10$ (which would require $|i - j| = 5$ for each antipodal pair). On the other hand, for any sequence $\{ v_0, v_1, ..., v_{12} \}$ of adjacent vertices as above in which $$v_0 \sim v_6 \sim v_{12}$$ $$v_1 \sim v_5 \sim v_9$$ $$v_2 \sim v_8$$ $$v_3 \sim v_7 \sim v_{11}$$ $$v_4 \sim v_{10}$$ we necessarily have $v_{12} = v_0$, and having three different classes for the even indexed vertices is possible since $k \geq 3$. We conclude that the girth is equal to 12.
\end{proof}

\section{Non-induced subgraphs}

In this section, the notation $$(x_1^{(1)}, ..., x_n^{(1)}) \sim (x_1^{(2)}, ..., x_n^{(2)})$$ means that there is an edge between $(x_1^{(1)}, ..., x_n^{(1)})$ and $(x_1^{(2)}, ..., x_n^{(2)})$.

It is generally harder to prove anything about subgraphs if they are not required to be induced, but it does open up the possibility of higher values of the girth. We will focus on cubic subgraphs in 3 and 4 dimensions. Each example given here has a vertex set consisting of all lattice points, and then we describe the edge set in each case.

Even when $\Lambda = \mathbb{Z}^3$, the maximal value of the girth is not known. It is either 12, 14 or 16. The upper bound comes from a straightforward comparison of the number of vertices at a graph distance $\leq r$ from a fixed vertex in a 3-regular tree and in the 3-dimensional grid (confer next section), with parity taken into consideration.

For the lower bound, there are infinitely many different cubic subgraphs of girth 12, but we omit the details here. The highest value that has been found for the spread at depth 6 is 84, as attained by the following subgraph $\Gamma^{(3)}$. The edges are given by $$z \equiv 0 \textrm{ (mod 2)} \Rightarrow (x, y, z) \sim (x + 1, y, z)$$ $$z \equiv 1 \textrm{ (mod 2)} \Rightarrow (x, y, z) \sim (x, y + 1, z)$$ $$2x + 2y + 3z \equiv 0, 1 \textrm{ or } 5 \textrm{ (mod 6)} \Rightarrow (x, y, z) \sim (x, y, z + 1)$$

Consider the cubic subgraph $\Gamma ^\textrm{BCC}$ of the grid associated with the BCC lattice, having the following edge set.
\newline

\noindent$x+y \equiv 0 \textrm{ (mod 4)}, z \equiv 0 \textrm{ or } 3 \textrm{ (mod 8)}$

\noindent$\Rightarrow (x, y, z) \sim (x + 1, y + 1, z + 1) \sim (x + 2, y + 2, z + 2) \sim (x + 3, y + 3, z + 3)$
\newline

\noindent$x-y \equiv 0 \textrm{ (mod 4)}, z \equiv 2 \textrm{ or } 5 \textrm{ (mod 8)}$

\noindent$\Rightarrow (x, y, z) \sim (x + 1, y - 1, z + 1) \sim (x + 2, y - 2, z + 2) \sim (x + 3, y - 3, z + 3)$
\newline

\noindent$x+y \equiv 0 \textrm{ (mod 4)}, z \equiv 4 \textrm{ or } 7 \textrm{ (mod 8)}$

\noindent$\Rightarrow (x, y, z) \sim (x - 1, y - 1, z + 1) \sim (x - 2, y - 2, z + 2) \sim (x - 3, y - 3, z + 3)$
\newline

\noindent$x-y \equiv 0 \textrm{ (mod 4)}, z \equiv 6 \textrm{ or } 1 \textrm{ (mod 8)}$

\noindent$\Rightarrow (x, y, z) \sim (x - 1, y + 1, z + 1) \sim (x - 2, y + 2, z + 2) \sim (x - 3, y + 3, z + 3)$
\newline

\noindent We have $g(\Gamma^\textrm{BCC}) = 12$ and $s(\Gamma^\textrm{BCC}, 6) = 93$. Can this be improved upon?

For the grid associated with the FCC lattice, there are infinitely many different cubic subgraphs of girth 14. The following example, $\Gamma ^\textrm{FCC}$, attains a spread of 171 at depth 7. The edge set consists of the edges induced by the vertices $$(4, 0, 0), (3, 1, 0), (0, 2, 0), (0, 1, 1), (3, 2, 1), (4, 3, 1), (1, 1, 2),$$ $$(2, 2, 2), (1, 0, 3), (2, 3, 3), (3, 4, 3), (2, 0, 4), (1, 3, 4)$$ and translated by the vectors spanned by \{[2, 0, 0], [0, 2, 0], [1, 1, 4]\}.

For $\Lambda = \mathbb{Z}^4$, suppose $G$ is a subgraph with the property that for each \newline $i \in \mathbb{Z}$, the adjacency between two vertices $(x_1, x_2, x_3, x_4)$ and $(x_5, x_6, x_7, x_8)$ with $x_1 + x_2 + x_3 + x_4 = i$ and $x_5 + x_6 + x_7 + x_8 = i + 1$ is uniquely determined by $i$ and the residue classes of $x_1, ..., x_8 \textrm{ (mod 2)}$. $G$ can thus be represented by a sequence $\{ H_i \}_{i \in \mathbb{Z}}$ of subgraphs of the unit distance graph on $\{ 0, 1 \}^4$. Suppose furthermore that the set of edges of each $H_i$ is identical to one of the sets $E_1, ..., E_{10}$, in which the neighbours of the vertices $(x_1, x_2, x_3, x_4)$ with $x_1 + x_2 + x_3 + x_4 \equiv 0 \textrm{ (mod 2)}$ via edges in $E_1$, $E_2$, $E_3$ and $E_4$ are given in the following tables, while $E_5 = E_1 \cup E_2$, $E_6 = E_1 \cup E_3$, $E_7 = E_1 \cup E_4$, $E_8 = E_2 \cup E_3$, $E_9 = E_2 \cup E_4$ and $E_{10} = E_3 \cup E_4$:
\newline

\noindent
	\begin{tabular}{|c|c|c|}
		\hline
		$(x_1, x_2, x_3, x_4)$& Adjacent vertex via $E_1$ & Adjacent vertex via $E_2$ \\
		\hline
		$(0, 0, 0, 0) \textrm{ or } (1, 1, 1, 1)$ &
		$(1 - x_1, x_2, x_3, x_4)$ & $(x_1, 1 - x_2, x_3, x_4)$ \\
		$(0, 0, 1, 1) \textrm{ or } (1, 1, 0, 0)$ &
		$(x_1, x_2, x_3, 1 - x_4)$ & $(x_1, x_2, 1 - x_3, x_4)$ \\
		$(0, 1, 0, 1) \textrm{ or } (1, 0, 1, 0)$ &
		$(x_1, 1 - x_2, x_3, x_4)$ & $(1 - x_1, x_2, x_3, x_4)$ \\
		$(0, 1, 1, 0) \textrm{ or } (1, 0, 0, 1)$ &
		$(x_1, x_2, 1 - x_3, x_4)$ & $(x_1, x_2, x_3, 1 - x_4)$ \\
		\hline
	\end{tabular}
\newline
\newline

\noindent
	\begin{tabular}{|c|c|c|}
		\hline
		$(x_1, x_2, x_3, x_4)$& Adjacent vertex via $E_3$ & Adjacent vertex via $E_4$ \\
		\hline
		$(0, 0, 0, 0) \textrm{ or } (1, 1, 1, 1)$ &
		$(x_1, x_2, 1 - x_3, x_4)$ & $(x_1, x_2, x_3, 1 - x_4)$ \\
		$(0, 0, 1, 1) \textrm{ or } (1, 1, 0, 0)$ &
		$(x_1, 1 - x_2, x_3, x_4)$ & $(1 - x_1, x_2, x_3, x_4)$ \\
		$(0, 1, 0, 1) \textrm{ or } (1, 0, 1, 0)$ &
		$(x_1, x_2, x_3, 1 - x_4)$ & $(x_1, x_2, 1 - x_3, x_4)$ \\
		$(0, 1, 1, 0) \textrm{ or } (1, 0, 0, 1)$&
		$(1 - x_1, x_2, x_3, x_4)$ & $(x_1, 1 - x_2, x_3, x_4)$ \\
		\hline
	\end{tabular}
\newline
\newline

\noindent If, for each $i \equiv 0 \textrm{ (mod 2)}$, the edge sets of $(H_i, H_{i+1}, H_{i+2})$ are either
\newline

$(E_5, E_1, E_6), (E_6, E_1, E_7), (E_7, E_1, E_5), (E_5, E_2, E_9)$,

$(E_9, E_2, E_8), (E_8, E_2, E_5), (E_6, E_3, E_8), (E_8, E_3, E_{10})$,

$(E_{10}, E_3, E_6), (E_7, E_4, E_{10}), (E_{10}, E_4, E_9) \textrm{ or } (E_9, E_4, E_7)$
\newline

\noindent then $G$ is cubic with $g(G)=16$ and $s(G, 8) = 368$. There are also other ways to combine these edge sets and obtain a cubic graph of girth 16, but with a lower spread at depth 8.

\section{An upper bound}

Suppose $k$ is a fixed integer $\geq 3$. The maximal girth of a $k$-regular subgraph of the $n$-dimensional grid grows at most linearly with $n$. More specifically:

\begin{theorem}
	If $k \geq 3$ and $n \geq 2$ are integers, $\alpha_k$ is a positive real number such that $$(k - 1)^{\alpha_k} > e(2 \alpha_k + 1)$$ and $r > \alpha_k n$, then the number of vertices at a graph distance $\leq r$ from a fixed vertex is larger in a $k$-regular tree than in the $n$-dimensional grid.
\end{theorem}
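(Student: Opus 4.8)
The plan is to compare two explicit counting functions and reduce the theorem to a single clean inequality. Write $T(k,r)$ for the number of vertices within graph distance $r$ of the root in the $k$-regular tree, and $N(n,r)$ for the number of lattice points within graph distance $r$ of the origin in the $n$-dimensional grid. Since the tree has $k(k-1)^{r-1}$ vertices at distance exactly $r$, and $k > k-1$ gives $k(k-1)^{r-1} > (k-1)^r$, we have $T(k,r) > (k-1)^r$. Thus it suffices to prove $N(n,r) \le (k-1)^r$.

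First I would pin down $N(n,r)$ exactly. Graph distance in the grid is the $\ell^1$ (taxicab) distance, so $N(n,r)$ counts integer points $\mathbf{x}$ with $\sum_i |x_i| \le r$. Classifying such points by the number $j$ of nonzero coordinates — choosing those coordinates, assigning signs, and choosing positive magnitudes summing to at most $r$ — gives the Delannoy-type identity $N(n,r) = \sum_{j=0}^{n} 2^j \binom{n}{j}\binom{r}{j}$. The heart of the argument is then the bound $N(n,r) \le \bigl[\,e(2r/n + 1)\,\bigr]^n$. Using $\binom{r}{j} \le r^j/j!$ reduces the task to controlling $\sum_{j} \binom{n}{j}(2r)^j/j!$, and I would compare this termwise against the binomial expansion $(1 + 2r/n)^n = \sum_j \binom{n}{j}(2r/n)^j$. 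This comes down to the elementary inequality $n^j \le e^n\, j!$ for $0 \le j \le n$, which via $\ln j! \ge j\ln j - j$ reduces (with $t = j/n$) to showing $t(1 - \ln t) \le 1$ on $(0,1]$; the left side has derivative $-\ln t \ge 0$, so it increases to its maximum value $1$ at $t=1$. Summing the termwise estimates and reapplying the binomial theorem delivers the bound, with the constant $e$ and the factor $2r/n + 1$ appearing exactly.

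Finally I would convert the hypothesis on $\alpha_k$ into the inequality actually needed at $\beta := r/n > \alpha_k$. Setting $h(x) = (k-1)^x/(2x+1)$, the hypothesis reads $h(\alpha_k) > e$, and I want $h(\beta) > e$. A short calculus check shows $h'$ changes sign only once, so $h$ is decreasing then increasing, with $h \le h(0) = 1 < e$ on its decreasing branch; hence $\{x : h(x) > e\}$ is an interval $(x^*,\infty)$ on which $h$ is increasing. Therefore $h(\alpha_k) > e$ forces $\alpha_k > x^*$, so $\beta > \alpha_k > x^*$ gives $h(\beta) > h(\alpha_k) > e$, that is $e(2\beta+1) < (k-1)^\beta$. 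Combining the pieces yields $N(n,r) \le \bigl[e(2\beta+1)\bigr]^n < (k-1)^{\beta n} = (k-1)^r < T(k,r)$, as required.

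I expect the main obstacle to be the grid-count bound: forcing the constant to be exactly $e$ (rather than something larger, which would only prove a weaker statement) hinges on the sharp termwise estimate $n^j \le e^n j!$, tight at $j=n$. A secondary subtlety is the monotonicity step, since $\alpha_k$ need not lie on the increasing branch of $h$ a priori — but, as the unimodality argument shows, the condition $h(\alpha_k) > e$ automatically places it there, which is what makes the implication from $\alpha_k$ to $\beta = r/n$ go through.
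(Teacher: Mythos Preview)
Your argument is correct and reaches the same intermediate bound $N(n,r)\le\bigl(e(2r/n+1)\bigr)^n$ as the paper, but by a genuinely different route. The paper bounds the Delannoy number geometrically: each lattice point at $\ell^1$-distance $\le r$ sits in a unit cube contained in the cross-polytope $\{|x_1|+\cdots+|x_n|\le r+n/2\}$, whose volume is $(2r+n)^n/n!$; one application of $n!\ge(n/e)^n$ then yields the bound. You instead work from the explicit sum $\sum_j 2^j\binom{n}{j}\binom{r}{j}$ and compare termwise with the binomial expansion of $(1+2r/n)^n$, the key step being $n^j\le e^n j!$ --- which is again $m!\ge(m/e)^m$, now applied at $m=j$ rather than $m=n$. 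So the two proofs share the same Stirling-type input but deploy it differently; the paper's geometric packing is shorter, while your approach avoids any volume reasoning. You are also more careful than the paper on the final implication: the paper simply asserts that $(k-1)^{\alpha_k}>e(2\alpha_k+1)$ together with $r>\alpha_k n$ gives $(k-1)^r>\bigl(e(2r/n+1)\bigr)^n$, whereas your unimodality analysis of $h(x)=(k-1)^x/(2x+1)$ actually justifies why the superlevel set $\{h>e\}$ is a half-line, which is what makes that step valid.
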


\begin{proof}
	The number of vertices at a graph distance $\leq r$ from a fixed vertex in a $k$-regular tree is given by $${k(k - 1)^r - 2} \over {k - 2}$$ The number of vertices at a graph distance $\leq r$ from a fixed vertex in the $n$-dimensional grid is given by the Delannoy number $D(n, r)$. We can assume that the initial vertex is the origin, and then each lattice point that can be reached in $r$ steps is centred in a unit $n$-hypercube that is entirely inside of every half-space given by $$a_1 x_1 + ... + a_n x_n \leq r + {n \over 2}$$ with $a_i = \pm 1$. The hypervolume of the generalized octahedron that forms the intersection of these half-spaces is therefore an upper bound for $D(r, n)$. Thus, we have $$D(r, n) \leq {(2r + n)^n \over n!}$$ Furthermore, we have $$n! \geq \left( {n \over e} \right)^n$$ for $n \geq 0$. This follows directly from the Taylor expansion of $e^x$, as noted in \cite{Tao:2010}. Hence if $(k - 1)^{\alpha_k} > e(2 \alpha_k + 1)$ and $r > \alpha_k n$, we have: $$(k - 1)^r > {{\left(2r + n\right)^n} \over {\left( {n \over e} \right)^n}}$$ $$\Rightarrow {{k(k - 1)^r - 2} \over {k - 2}} > (k - 1)^r > {\left(2r + n\right)^n \over n!} \geq D(r, n)$$
\end{proof}

\begin{corollary}
	With $k$, $n$ and $\alpha_k$ as in Theorem 4, the girth of a $k$-regular subgraph of the $n$-dimensional grid is at most $2 \alpha_k n$.
\end{corollary}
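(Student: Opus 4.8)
The plan is to run the standard Moore-type localization and feed it into Theorem 4. First I would recall that in any $k$-regular graph of girth $g$, the ball of radius $r = \lfloor (g-1)/2 \rfloor$ about any vertex $v_0$ induces a tree: a cycle lying entirely inside this ball would have all its vertices within distance $r$ of $v_0$ and hence length at most $2r+1 \leq g$, which is impossible once $2r+1 < g$. Consequently this ball contains exactly $\frac{k(k-1)^r - 2}{k-2}$ vertices, the same count as the corresponding ball in the $k$-regular tree. The second ingredient is that a $k$-regular subgraph $\Gamma$ of the grid inherits the grid metric from above: every path in $\Gamma$ is a path in the grid, so the $\Gamma$-ball of radius $r$ injects into the grid-ball of radius $r$ about $v_0$. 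Since the tree-ball vertices are distinct lattice points all at grid-distance $\leq r$, I obtain $\frac{k(k-1)^r - 2}{k-2} \leq D(n,r)$.

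Next I would invoke Theorem 4 in contrapositive form. The inequality $\frac{k(k-1)^r - 2}{k-2} \leq D(n,r)$ forbids $r > \alpha_k n$, so $r = \lfloor (g-1)/2 \rfloor \leq \alpha_k n$. Because the grid is bipartite, every cycle is even, whence $g = 2m$ and $r = m-1$; the comparison therefore yields $m - 1 \leq \alpha_k n$, that is, $g \leq 2\alpha_k n + 2$.

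To sharpen $m-1 \leq \alpha_k n$ into the claimed $m \leq \alpha_k n$ (equivalently $g = 2m \leq 2\alpha_k n$) I would bring in parity, as the paper signals. Partition the grid-ball of radius $m-1$ into its two bipartition classes, of sizes $D^{+}(n,m-1)$ and $D^{-}(n,m-1)$ with $D^{+}+D^{-}=D(n,m-1)$; the even-depth vertices of the tree all fall into one class and the odd-depth vertices into the other, giving two separate injections. The aim is to bound each bipartition class of the grid-ball by roughly half the octahedron volume $\frac{(2r+n)^n}{n!}$ — a near-halving, since consecutive distance-shells carry opposite parities and the boundary shell contributes only a lower-order correction — and to match this against the even- and odd-level subtree sums. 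If the arithmetic cooperates, the factor gained here is exactly what is needed to replace $m-1$ by $m$ in the threshold coming from Theorem 4.

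The main obstacle is precisely this last step. The unrefined argument genuinely stalls at $g \leq 2\alpha_k n + 2$, because the radius-$m$ ball of $\Gamma$ is \emph{not} a tree (a shortest $2m$-cycle fits inside a ball of radius $m$ centred at one of its vertices), so one cannot simply count it against the full $k$-regular tree of radius $m$; and the edge-rooted ``double tree'' of depth $m-1$, while collision-free, carries only the even-girth Moore count $2\,\frac{(k-1)^m - 1}{k-2}$, which sits below the $D(n,m)$ threshold and yields nothing sharper. Extracting the final unit thus hinges entirely on the parity bookkeeping: controlling how $D(n,r)$ splits into its two residue classes with small enough error, and verifying that the heavier bipartition class of the subtree overflows the corresponding class of lattice points at radius $m-1$. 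This estimate, rather than the Moore-bound skeleton, is where the real work of the corollary lies.
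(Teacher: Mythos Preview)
Your approach is exactly the one the paper intends: the corollary is stated without proof, so the paper is treating the Moore-bound comparison plus Theorem~4 as the entire argument. You carry this out correctly and obtain $g \le 2\alpha_k n + 2$, then notice that this falls short of the stated $g \le 2\alpha_k n$ by an additive $2$. That observation is accurate, and the paper simply does not address it; there is no hidden sharper argument in the text.

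Your attempt to recover the missing units via parity is not completed, and I do not see that it can be in the generality of the corollary. Splitting $D(n,m-1)$ into its two bipartition classes and matching them against the even- and odd-level subtree sums does not obviously gain a factor, since both sides split roughly in half; the paper's earlier remark about ``parity taken into consideration'' refers to a specific small case ($n=3$, $k=3$), where exact Delannoy numbers are compared, not to a general mechanism. The edge-rooted double-tree idea you mention is also correctly diagnosed as insufficient. So the honest situation is: the elementary argument yields $g \le 2\alpha_k n + 2$, the paper records $g \le 2\alpha_k n$, and since $\alpha_k$ is any real satisfying a strict inequality (hence chosen with slack), the additive $2$ is immaterial to the paper's purpose of establishing linear growth in $n$. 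You have been more careful than the paper here; the discrepancy is a minor imprecision in the statement rather than a gap in your reasoning.
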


\noindent Some valid $\alpha$-values are: $\alpha_3 = 4.87$, $\alpha_4 = 2.56$, $\alpha_5 = 1.84$.

\section{Conclusion}

We have gone through a number of examples of subgraphs of apparently high girth. Some have been proven to be optimal, but we also have a plethora of open problems.

In addition to considering single cases, we have seen that the smallest possible value of $n$ grows at most quadratically as a function of $k$ if the girth is required to be 12. Is it possible to prove polynomial growth for higher values of the girth (induced or non-induced case)?

On a lighter note, the reader is also encouraged to build physical models of some of these graphs - for aesthetic value.

\end{document}